\newtheorem{theorem}{Theorem}[section]
\newtheorem{proposition}[theorem]{Proposition}
\newtheorem{lemma}[theorem]{Lemma}
\newtheorem{corollary}[theorem]{Corollary}
\theoremstyle{definition}
\newtheorem{definition}[theorem]{Definition}
\numberwithin{equation}{section}
\begin{document}
\baselineskip=15.5pt

\title[Finite vector bundles over compact complex manifolds]{\'Etale triviality of
finite vector bundles over compact complex manifolds}

\author[I. Biswas]{Indranil Biswas}

\address{School of Mathematics, Tata Institute of Fundamental
Research, Homi Bhabha Road, Mumbai 400005, India}

\email{indranil@math.tifr.res.in}

\subjclass[2010]{32L10, 53C55, 14D21}

\keywords{Finite bundle, monodromy, stability, Hermitian--Einstein
metric.}

\date{}

\begin{abstract}
A vector bundle $E$ over a projective variety $M$ is called finite if it satisfies a 
nontrivial polynomial equation with nonnegative integral coefficients. Introducing finite 
bundles, Nori proved that $E$ is finite if and only if the pullback of $E$ to some finite 
\'etale covering of $M$ is trivializable \cite{No1}. The definition of finite bundles extends 
naturally to holomorphic vector bundles over compact complex manifolds. We prove that a 
holomorphic vector bundle over a compact complex manifold $M$ is finite if and only if the 
pullback of $E$ to some finite \'etale covering of $M$ is holomorphically trivializable.
Therefore, $E$ is finite if and only if it admits a flat holomorphic connection with finite
monodromy. In \cite{BP} this 
result was proved under the extra assumption that the compact complex manifold $M$ admits a 
Gauduchon astheno-K\"ahler metric.
\end{abstract}

\maketitle

\section{Introduction}

Given a vector bundle $E$ on a projective variety and a polynomial $P(x)\,=\, \sum_{i=0}^N 
a_i x^i$, where $a_i$ are nonnegative integers, define
$$
P(E)\,:=\, \bigoplus_{i=0}^N (E^{\otimes i})^{\oplus a_i}\, ,
$$
where $E^{\otimes 0}$ is the trivial line bundle. A vector bundle $E$ is called finite if 
there are two distinct polynomials $P_1$ and $P_2$ of the above type such that the two vector 
bundles $P_1(E)$ and $P_2(E)$ are isomorphic; finite bundles were introduced by Nori in 
\cite{No1}. The above definition clearly makes sense if $E$ is a holomorphic vector bundle on 
a compact complex manifold.

There are several equivalent definitions of finiteness \cite[p.~35, Lemma 3.1]{No1}. The one 
that is most useful to work with is the following: A vector bundle $E$ is finite if and only 
if there are finitely many vector bundles $V_1,\, \cdots,\, V_p$ such that
$$
E^{\otimes j}\, =\, \bigoplus_{i=1}^p V^{\oplus j_i}_i
$$
for all $j\, \geq\, 1$, where $j_i$ are nonnegative integers.

Nori proved that a vector bundle $E$ on a complex projective variety $Z$ is finite if and 
only if the pullback of $E$ to some finite \'etale covering of $Z$ is trivializable 
\cite{No1}, \cite{No2}. In \cite{No1}, \cite{No2} the base field is allowed to be any 
algebraically closed field, although we have stated here for complex numbers. When $Z$ is 
smooth, a vector bundle $E$ pulls back to the trivial bundle under some finite \'etale 
covering of $Z$ if and only if $E$ admits a flat holomorphic connection with finite 
monodromy.

It is natural is ask whether the above characterization of finite vector
bundles remains valid for holomorphic vector bundles over compact complex manifolds.

In \cite{BHS} this characterization of finite bundles was proved for holomorphic
vector bundles on compact K\"ahler manifolds. In \cite{BP} this characterization was
proved for holomorphic vector bundles on compact complex manifolds that admit a
Gauduchon astheno-K\"ahler metric.

The following is proved here (Theorem \ref{thm2} and Corollary \ref{cor2}):

\begin{theorem}\label{thm0}
Let $E$ be a holomorphic vector bundle over a compact complex manifold $M$. Then $E$ is finite 
if and only if the pullback of $E$ to some finite \'etale covering of $M$ is holomorphically 
trivializable. Also, $E$ is finite if and only if it admits a flat holomorphic connection 
with finite monodromy.
\end{theorem}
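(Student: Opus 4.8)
The plan is to prove the cycle of implications: (flat connection with finite monodromy) $\Rightarrow$ (\'etale trivializability) $\Rightarrow$ (finiteness) $\Rightarrow$ (flat connection with finite monodromy), treating the first two as the routine directions and concentrating the work on the last. For the first, given a flat holomorphic connection on $E$ whose monodromy $\rho\colon\pi_1(M)\to\mathrm{GL}(r,\mathbb{C})$ has finite image, the subgroup $\ker\rho$ has finite index and defines a connected finite \'etale covering $q\colon\widehat M\to M$ on which the pulled-back connection has trivial monodromy; hence $q^*E$ is holomorphically trivial. For the second, let $p\colon\widetilde M\to M$ be a connected finite \'etale covering of degree $n$ (pass to the Galois closure if needed) with $p^*E$ trivial. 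Since $p$ is finite and \'etale, the projection formula gives $p_*p^*E\cong E\otimes p_*\mathcal{O}_{\widetilde M}$, and the normalized trace exhibits $E$ as a direct summand of $p_*p^*E\cong(p_*\mathcal{O}_{\widetilde M})^{\oplus r}$. As $p^*(E^{\otimes j})$ is trivial for every $j$, the same reasoning shows that every indecomposable summand of every $E^{\otimes j}$ is an indecomposable summand of the single bundle $p_*\mathcal{O}_{\widetilde M}$; by Krull--Schmidt (valid because $\mathrm{Hom}$-spaces are finite dimensional over the compact $M$) there are only finitely many such summands, so $E$ is finite.

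It remains to show that a finite bundle $E$ admits a flat holomorphic connection with finite monodromy. First I would fix a Gauduchon metric $\omega$ on $M$, which exists by Gauduchon's theorem, so that degrees and (semi)stability are defined and $\deg_\omega\colon\mathrm{Pic}(M)\to\mathbb{R}$ is a homomorphism. The determinant $\det E$ is a finite line bundle, hence a torsion point of $\mathrm{Pic}(M)$; in particular $\deg_\omega E=0$, and the cyclic covering trivializing $\det E$ equips $\det E$ with a flat unitary connection of finite order. Next I would show that finiteness forces semistability: if a subsheaf $S\subset E$ had $\mu(S)>0$, then the standard inequality $\mu_{\max}(E^{\otimes j})\geq j\,\mu_{\max}(E)$ would make $\mu_{\max}(E^{\otimes j})\to\infty$, whereas the bundles $E^{\otimes j}$ realize only finitely many isomorphism classes and hence finitely many values of the isomorphism invariant $\mu_{\max}$; applying this also to $E^*$ shows that $E$ is semistable of degree zero together with its dual. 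The bundle $E$ is in fact polystable: a non-split Jordan--H\"older extension of stable degree-zero finite bundles would, exactly as in Atiyah's analysis of indecomposable bundles on an elliptic curve, generate infinitely many pairwise non-isomorphic indecomposable summands under tensor powers, contradicting finiteness.

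The crux is to upgrade the resulting Hermitian--Einstein structure to a genuinely flat one \emph{without any K\"ahler or astheno-K\"ahler hypothesis}. Applying the Donaldson--Uhlenbeck--Yau correspondence for Gauduchon metrics (Li--Yau, L\"ubke--Teleman) to the polystable degree-zero bundle $E$ produces a Hermitian--Einstein metric $h$ with $\Lambda_\omega F_h=0$, and since $\det E$ is flat we may arrange $\mathrm{tr}\,F_h=0$. The key point is that the induced metric $h^{\otimes j}$ on $E^{\otimes j}$ is again Hermitian--Einstein, so by uniqueness it is the canonical such metric, and its curvature is $\sum_{i=1}^{j}\mathrm{Id}^{\otimes(i-1)}\otimes F_h\otimes\mathrm{Id}^{\otimes(j-i)}$. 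Because $F_h$ is trace-free all cross terms drop out, leaving $\|F_{h^{\otimes j}}\|_{L^2}^2=j\,r^{\,j-1}\,\|F_h\|_{L^2}^2$. Isomorphic polystable Hermitian--Einstein bundles have equal $L^2$ curvature norms, yet the $E^{\otimes j}$ fall into only finitely many isomorphism classes; as the right-hand side is strictly increasing in $j$, this forces $\|F_h\|_{L^2}=0$. Hence $F_h=0$, $E$ is Hermitian-flat, and $E$ arises from a unitary representation $\rho\colon\pi_1(M)\to U(r)$. It is precisely here that the astheno-K\"ahler hypothesis of \cite{BP} is bypassed: instead of extracting flatness from a second-Chern-class integral (which is uncontrolled when $\partial\bar\partial\,\omega^{n-2}\neq0$), the growth of curvature along tensor powers is played off against the finiteness of the set of isomorphism classes.

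Finally I would deduce that the monodromy is finite. The assignment sending a unitary representation to its associated Hermitian-flat bundle is injective on isomorphism classes, so the finitely many isomorphism classes among the $E^{\otimes j}$ yield finitely many among the $\rho^{\otimes j}$. Letting $K=\overline{\rho(\pi_1(M))}\subset U(r)$ denote the closure of the image, a compact Lie group, this says that the tensor powers of the faithful representation $K\hookrightarrow U(r)$ contain only finitely many distinct irreducible constituents; by Peter--Weyl this is impossible unless $\dim K=0$, so $K$, and hence the image of $\rho$, is finite. This produces the desired flat holomorphic connection with finite monodromy and closes the cycle. The main obstacle throughout is the complete absence of a K\"ahler-type hypothesis, so that the only analytic inputs are the Gauduchon degree and the Gauduchon Donaldson--Uhlenbeck--Yau correspondence; the two delicate steps are establishing polystability of finite bundles without a Bogomolov inequality and replacing the usual characteristic-class vanishing by the tensor-power curvature-growth estimate.
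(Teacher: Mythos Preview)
Your cycle of implications and the two ``easy'' directions are fine and essentially match the paper. The substantive difference is in the hard direction (finite $\Rightarrow$ flat with finite monodromy), where you take a differential--geometric route (Hermitian--Einstein metric, curvature growth under tensor powers, Peter--Weyl) instead of the paper's Tannakian one. This would be a genuinely interesting alternative, but there is a real gap at the polystability step, and a misstatement in the curvature argument.

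The polystability claim is not proved. Your sentence ``a non-split Jordan--H\"older extension of stable degree-zero finite bundles would, exactly as in Atiyah's analysis \ldots, generate infinitely many pairwise non-isomorphic indecomposable summands under tensor powers'' is an analogy, not an argument: Atiyah's elliptic-curve computation relies on an explicit classification that has no analogue on a general compact complex manifold, and you have not even shown that the Jordan--H\"older factors of $E$ are themselves finite (this is a nontrivial lemma in the paper). Without polystability you cannot invoke the Li--Yau/L\"ubke--Teleman correspondence to get a Hermitian--Einstein metric on $E$ in the first place, so the rest of your argument does not start. The paper sidesteps this entirely: it never claims a finite bundle is polystable a priori, but instead proves that the stable graded pieces of a Jordan--H\"older filtration are again finite, handles each stable finite piece by building a Tannakian category with finitely many simple objects (hence a finite Tannaka group), and then invokes a separate lemma to split the filtration after a finite \'etale pullback.

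Separately, your curvature step asserts that ``the $E^{\otimes j}$ fall into only finitely many isomorphism classes''; this is false as stated, since $\mathrm{rank}(E^{\otimes j})=r^j$. Finiteness only says that the \emph{indecomposable summands} of the $E^{\otimes j}$ lie among finitely many bundles $V_1,\dots,V_p$. The argument can be repaired: if $E$ were polystable then each $V_i$ is stable with its own Hermitian--Einstein metric, uniqueness gives $\|F_{h^{\otimes j}}\|_{L^2}^2=\sum_i j_i\|F_{V_i}\|_{L^2}^2\le C\, r^j$ with $C=\max_i \|F_{V_i}\|_{L^2}^2/\mathrm{rank}(V_i)$, and comparing with $j\,r^{j-1}\|F_h\|_{L^2}^2$ forces $F_h=0$. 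So this part is fixable, but only after the polystability gap is closed; as written the proof does not go through.
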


Both \cite{BHS} and \cite{BP} used, very crucially, the numerically flat bundles introduced 
by J.-P. Demailly, T. Peternell and M. Schneider in \cite{DPS}. Finite bundles are 
numerically flat. Theorem 1.18 of \cite[p.~311]{DPS} proves a characterization of numerically 
flat bundles on compact K\"ahler manifolds. This theorem of \cite{DPS}
was directly used in \cite{BHS}, and 
it was extended in \cite[p.~5, Theorem 3.2]{BP} to numerically flat bundles on compact 
complex manifolds admitting a Gauduchon astheno-K\"ahler metric, which was then used in an 
essential way. The proof of Theorem \ref{thm0} avoids use of numerically flat bundles.

\section{Holomorphic bundles on compact complex manifolds}

\subsection{Finite vector bundles}

Let $M$ be a compact connected complex manifold. A holomorphic vector bundle $E$ over $M$ is 
called \textit{decomposable} if there are holomorphic vector bundles $V$ and $W$ over $M$ of 
positive ranks such that $V\oplus W$ is holomorphically isomorphic to $E$. A holomorphic 
vector bundle is called \textit{indecomposable} if it is not decomposable. Clearly, any 
holomorphic vector bundle can be expressed as a direct sum of indecomposable vector bundles. 
Atiyah proved the following uniqueness theorem for such decompositions.

\begin{theorem}[{\cite[p.~315, Theorem~3]{At}}]\label{thm1}
Let $E$ be a holomorphic vector bundle over $M$ holomorphically isomorphic to both
$\bigoplus_{i=1}^m V_i$ and $\bigoplus_{j=1}^n W_j$, where $V_i$,
$1\, \leq\, i\, \leq\, m$, and $W_j$, $1\, \leq\, j\, \leq\, n$, are all
indecomposable vector bundles. Then $m\,=\, n$, and there is a permutation
$\sigma$ of $\{1,\, \cdots,\, m\}$ such that $V_i$ is holomorphically isomorphic to
$W_{\sigma(i)}$ for all $1\, \leq\, i\, \leq\, m$.
\end{theorem}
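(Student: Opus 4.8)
The plan is to deduce Theorem~\ref{thm1} from the Krull--Remak--Schmidt--Azumaya theorem, according to which, in any additive category, a decomposition of an object into a finite direct sum of indecomposables each of which has a \emph{local} endomorphism ring is unique up to reordering and isomorphism. So all the content of Theorem~\ref{thm1} is concentrated in the following assertion, which I would prove first: \emph{if $V$ is an indecomposable holomorphic vector bundle over the compact connected complex manifold $M$, then the $\mathbb{C}$-algebra $A \,:=\, \mathrm{End}(V) \,=\, H^0(M,\, \mathcal{E}nd(V))$ is local.}

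To prove this, I would start from the observation that $A$ is finite-dimensional over $\mathbb{C}$: the sheaf $\mathcal{E}nd(V)$ is coherent and $M$ is compact, so this is the Cartan--Serre finiteness theorem. Now fix any $\phi \,\in\, A$. Because $A$ is a finite-dimensional algebra, $\phi$ satisfies a polynomial identity; factoring out the largest power of the variable and using the coprimality of the two factors, one produces orthogonal idempotents $e$ and $1-e$ in $A$, each a polynomial in $\phi$, such that $\phi$ is nilpotent on the subbundle $eV$ and an automorphism of $(1-e)V$ --- here one uses that a finite-dimensional algebra is Dedekind-finite, so that a one-sided inverse is automatically a two-sided inverse. (This is Fitting's lemma, and the idempotents produce genuine holomorphic subbundle decompositions, since an idempotent endomorphism of a locally free sheaf over a connected base splits it.) As $V$ is indecomposable, either $e \,=\, 0$ or $e \,=\, 1$, so every element of $A$ is either nilpotent or an automorphism. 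It remains to see that the nilpotent elements form a two-sided ideal: if $\phi$ is nilpotent and $\alpha\phi$ were an automorphism then $\phi$ would be left-invertible, hence (Dedekind-finiteness again) invertible, a contradiction, and symmetrically for $\phi\alpha$; and if $\phi, \psi$ are nilpotent with $\phi + \psi$ an automorphism then, multiplying by its inverse, we may assume $\phi + \psi \,=\, 1$, whence $\psi \,=\, 1 - \phi$ is invertible because $\phi$ is nilpotent --- contradiction. Thus the nilpotents constitute the unique maximal one-sided ideal of $A$, i.e. $A$ is local.

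With the lemma established, the uniqueness statement follows by the classical exchange argument, which I would run by induction on $m$. Given an isomorphism $\varphi$ from $\bigoplus_{i=1}^m V_i$ to $\bigoplus_{j=1}^n W_j$ with inverse $\psi$, all $V_i$, $W_j$ indecomposable, write $\varphi$ and $\psi$ as matrices of homomorphisms between the summands. The $(1,1)$-entry of $\psi\varphi \,=\, \mathrm{id}$ gives $\sum_{j=1}^n \psi_{1j}\varphi_{j1} \,=\, \mathrm{id}_{V_1}$; since $\mathrm{End}(V_1)$ is local, at least one summand $\psi_{1j}\varphi_{j1}$ is an automorphism of $V_1$, and after renumbering the $W_j$ we may assume $j \,=\, 1$. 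Then $\varphi_{11} \colon V_1 \,\longrightarrow\, W_1$ is a split monomorphism whose image is a nonzero direct summand of the indecomposable bundle $W_1$, forcing $\varphi_{11}$ to be an isomorphism. Left- and right-composing $\varphi$ with the evident block-unipotent automorphisms of $\bigoplus_j W_j$ and $\bigoplus_i V_i$ (to clear the first column and then the first row) replaces $\varphi$ by an isomorphism of the form $\varphi_{11} \oplus \varphi'$, where $\varphi' \colon \bigoplus_{i=2}^m V_i \,\longrightarrow\, \bigoplus_{j=2}^n W_j$ is again an isomorphism; the inductive hypothesis then yields $m \,=\, n$ together with the required permutation $\sigma$ with $V_i \,\cong\, W_{\sigma(i)}$.

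The one step where I expect the actual work to lie is the localness lemma --- and within it, the only input concerning $M$ is the finiteness of $\dim_{\mathbb{C}}\mathrm{End}(V)$, supplied in one stroke by compactness; in particular no K\"ahler, projectivity, or metric hypothesis is needed. Everything downstream (the Fitting decomposition, verifying the ideal property, and the exchange/cancellation manipulations) is pure ring theory and formal manipulation in an additive category, so beyond careful bookkeeping I would not anticipate any genuine obstacle. This is, in outline, Atiyah's original argument in \cite{At}.
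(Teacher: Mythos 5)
The paper offers no proof of this statement: it is quoted verbatim as Theorem~3 of Atiyah's paper \cite{At}, and your argument --- finite-dimensionality of $\mathrm{End}(V)$ from compactness, Fitting's lemma to show the endomorphism algebra of an indecomposable bundle is local, then the Krull--Remak--Schmidt--Azumaya exchange argument --- is precisely the argument of that cited source. Your write-up is correct as it stands, including the key observation that a holomorphic idempotent splits a bundle over a connected base.
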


\begin{definition}\label{def0}
A holomorphic vector bundle $V$ will be called a \textit{component} of $E$ if there
is another holomorphic bundle $W$ such that $V\oplus W$ is isomorphic to $E$. If
a component $V$
is also indecomposable, then it would be called an \textit{indecomposable component}.
\end{definition}

A holomorphic vector bundle $E$ on $M$ is called \textit{finite} if there are finitely many
holomorphic vector bundles $V_1,\, \cdots,\, V_p$ such that
\begin{equation}\label{e1}
E^{\otimes j}\, =\, \bigoplus_{i=1}^p V^{\oplus j_i}_i
\end{equation}
for all $j\, \geq\, 1$, where $j_i$ are nonnegative integers (\cite[p.~35, Definition]{No1}, 
\cite[p.~35, Lemma 3.1(d)]{No1}), \cite[p.~80, Definition]{No2}. As mentioned in the introduction,
$E$ is finite if and only if there are two distinct polynomials $P_1$ and $P_2$, whose coefficients
are nonnegative integers, such that $P_1(E)$ is holomorphically isomorphic to $P_2(E)$
\cite[p.~35, Definition]{No1}, \cite[p.~35, Lemma 3.1(a)]{No1}.

Any vector bundle of the
form $\bigoplus_{i=1}^N W^{\oplus n_i}_i$, where $n_i$ are nonnegative integers, will be called
a direct sum of copies of $W_1,\, \cdots,\, W_N$.

We can assume that each $V_i$ in the above definition is a component (see Definition \ref{def0}) of 
some $E^{\otimes m}$. If $V_i$ is a component of $E^{\otimes m}$, then $V^{\otimes k}$ is a 
component of $E^{\otimes km}$ for all $k\, \geq\, 1$. Hence using Theorem \ref{thm1} it follows 
that $V^{\otimes k}$ is a direct sum of copies of the indecomposable components (see Definition 
\ref{def0}) of $\bigoplus_{i=1}^p V_i$. So $V_1,\, \cdots,\, V_p$ are finite bundles. Similarly, it 
follows from Theorem \ref{thm1} that any component of a finite bundle is also a finite bundle 
\cite[p.~36, Lemma 3.2(2)]{No1}, \cite[p.~80, Lemma 3.2(2)]{No2}.

Hence we may assume that each $V_i$ in \eqref{e1} is indecomposable.

If $E_1$ and $E_2$ are finite vector bundles, then both $E_1\oplus E_2$ and $E_1\otimes E_2$ are 
also finite \cite[p.~36, Lemma 3.2(1)]{No1}, \cite[p.~80, Lemma 3.2(1)]{No2}. A holomorphic line 
bundle is finite if and only if it is of finite order \cite[p.~36, Lemma 3.2(3)]{No1}, \cite[p.~80, 
Lemma 3.2(3)]{No2}. If $E$ is finite then clearly the dual bundle $E^*$ is also finite.

\subsection{Semistable sheaves}

Let $d$ be the complex dimension of $M$.
A Gauduchon metric on $M$ is a Hermitian structure $g$ on $M$
such that the associated positive $(1,\, 1)$--form $\omega_g$ on $M$ satisfies
the equation $$\partial\overline{\partial}\omega^{d-1}_g\,=\, 0\, .$$
Gauduchon metric exists \cite[p.~502, Th\'eor\`eme]{Ga}. Fix a Gauduchon
metric $g$ on $M$. This enables us to define the degree of a torsionfree coherent
analytic sheaf $F$ on $M$ as follows: Fix a Hermitian structure on the determinant line bundle
$\det(F)\, \longrightarrow\, M$ (see \cite[Ch.~V, \S~6]{Ko2} for determinant bundle), and
denote by ${\mathcal K}$ the curvature of the corresponding Chern connection; now define
$$
\text{degree}(F)\,=\, \frac{1}{2\pi\sqrt{-1}}\int_M {\mathcal K}\wedge \omega^{d-1}_g
\, \in\, \mathbb R
$$
\cite[p.~43, Definition 1.4.1]{LT}. Define the slope of $F$
$$
\mu(F)\, :=\, \frac{\text{degree}(F)}{\text{rank}(F)}\, \in\, \mathbb R\, .
$$
This allows us to define stability and semistability \cite[p.~44, Definition 1.4.3]{LT}.
We recall that a torsionfree coherent
analytic sheaf $F$ on $M$ is called polystable if it is a direct sum of stable sheaves
of same slope; in particular, polystable sheaves are semistable.

For a torsionfree coherent analytic sheaf $F$ on $M$, define $\mu_{\rm max}(F)$ to be the slope 
of the maximal semistable subsheaf of $F$. In other words, $\mu_{\rm max}(F)$ is the slope of 
the first term in the Harder--Narasimhan filtration of $F$.

A holomorphic vector bundle on $M$ is polystable if it admits a Hermitian--Einstein metric 
\cite[p.~55, Theorem 2.3.2]{LT} (see also \cite[p.~12, Proposition 4]{Lu}, \cite[p.~177--178, 
Theorem 8.3]{Ko2} and \cite{Ko1}). Moreover, A stable holomorphic vector bundle on $M$ admits 
a Hermitian--Einstein metric \cite[p.~563, Theorem 1]{LY}, \cite[p.~61, Theorem 3.0.1]{LT}. 
If $H_1$ and $H_2$ are Hermitian--Einstein metrics on $E$ and $F$ respectively, then 
$H_1\otimes H_2$ is a Hermitian--Einstein metric on $E\otimes F$. This immediately implies 
that the tensor product of two stable vector bundles is polystable. Now it is straight-forward to 
deduce the following:

\begin{corollary}\label{cor1}
If $E$ and $F$ are polystable vector bundles, then $E\otimes F$ is also polystable.
\end{corollary}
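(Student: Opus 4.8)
The plan is to deduce Corollary \ref{cor1} from the already-recalled fact that a vector bundle admitting a Hermitian--Einstein metric is polystable, together with the observation (made just above the corollary) that the tensor product of Hermitian--Einstein metrics is again Hermitian--Einstein. First I would reduce to the case of stable bundles: write $E\,=\,\bigoplus_{i} E_i$ and $F\,=\,\bigoplus_{j} F_j$ with each $E_i$, $F_j$ stable, all $E_i$ of the same slope and all $F_j$ of the same slope (this is the definition of polystable). Then $E\otimes F\,=\,\bigoplus_{i,j} (E_i\otimes F_j)$, so it suffices to show each $E_i\otimes F_j$ is polystable, because a direct sum of polystable sheaves of the same slope is polystable, and one checks that all the slopes $\mu(E_i\otimes F_j)\,=\,\mu(E_i)+\mu(F_j)$ are equal (using additivity of slope under tensor product for bundles of fixed ranks). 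Hence the corollary reduces to: the tensor product of two stable vector bundles is polystable.

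For the stable case, I would invoke the existence theorem: a stable holomorphic vector bundle on $M$ (with its fixed Gauduchon metric $g$) admits a Hermitian--Einstein metric \cite[p.~563, Theorem 1]{LY}, \cite[p.~61, Theorem 3.0.1]{LT}. So let $H_{E_i}$ and $H_{F_j}$ be Hermitian--Einstein metrics on $E_i$ and $F_j$ respectively. Then $H_{E_i}\otimes H_{F_j}$ is a Hermitian--Einstein metric on $E_i\otimes F_j$, as noted in the text preceding the corollary (the Einstein factors add). Finally, a holomorphic vector bundle admitting a Hermitian--Einstein metric is polystable \cite[p.~55, Theorem 2.3.2]{LT}; applying this to $E_i\otimes F_j$ gives that it is polystable, completing the argument.

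I do not expect any real obstacle here: every ingredient has been explicitly recalled in the preceding paragraph, and the only thing to verify is the bookkeeping that the various slopes match so that the direct sum stays polystable rather than merely semistable. The one point requiring a word of care is that ``polystable'' is closed under direct sums only when the summands share a common slope; since $\mu(E\otimes F)=\operatorname{rank}(F)\,\mu(E)+\operatorname{rank}(E)\,\mu(F)$ for vector bundles, and all $E_i$ have the common slope $\mu(E)$ while all $F_j$ have the common slope $\mu(F)$, each $E_i\otimes F_j$ has the same slope $\mu(E\otimes F)$, so this condition is automatically met. Thus the corollary follows, and this is exactly why the text calls it ``straight-forward''.
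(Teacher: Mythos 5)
Your argument is correct and is exactly the deduction the paper intends: stable bundles admit Hermitian--Einstein metrics, the tensor product of two such metrics is again Hermitian--Einstein, hence the tensor product of two stable bundles is polystable, and the general case follows by decomposing $E$ and $F$ into stable summands and checking that all the resulting slopes agree. One small slip: the formula $\mu(E\otimes F)\,=\,\operatorname{rank}(F)\,\mu(E)+\operatorname{rank}(E)\,\mu(F)$ in your last paragraph actually computes $\operatorname{degree}(E\otimes F)$, not the slope; the correct identity is $\mu(E\otimes F)\,=\,\mu(E)+\mu(F)$, which is the version you state earlier and is precisely what makes all the slopes $\mu(E_i\otimes F_j)$ coincide independently of the ranks of the summands.
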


For the following proposition see \cite[p.~22, Proposition 1.5.2]{HL} (in \cite[Proposition 
1.5.2]{HL} Gieseker (semi)stability is used, but the proof works for $\mu$--(semi)stability; see 
\cite[p.~25, Theorem 1.6.6]{HL} for it), \cite[p.~175--176, Theorem 7.18]{Ko2}, \cite[p.~1034, 
Proposition 3.1]{BDL}, \cite[p.~998, Proposition 2.1]{BTT}.

\begin{proposition}\label{prop1}
Let $E$ be a semistable holomorphic vector bundle on $M$, and let
$$
0\,=\, F_0\, \subset\, F_1\, \subset\, \cdots \, \subset\, F_{b-1} \, \subset\, F_{b}\,=\, E
$$
be a filtration of reflexive subsheaves such that $F_i/F_{i-1}$ is stable with
$\mu(F_i/F_{i-1})\,=\, \mu(E)$ for all $1\, \leq\, i\, \leq\, b$. Then the holomorphic isomorphism
class of the graded object
$$
\bigoplus_{i=1}^b F_i/F_{i-1}
$$
is independent of the choice of the filtration.
\end{proposition}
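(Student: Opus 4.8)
The plan is to adapt the classical Jordan--Hölder argument for semistable objects to this setting, where the "building blocks" are the stable quotients $F_i/F_{i-1}$, all of slope $\mu(E)$. The category of semistable sheaves of a fixed slope $\mu(E)$ on $M$ is abelian (kernels, cokernels and images of morphisms between semistable sheaves of slope $\mu$ are again semistable of slope $\mu$, using that $\mu_{\min}$ of the source is $\ge\mu$ and $\mu_{\max}$ of the target is $\le\mu$), and in it the stable sheaves of slope $\mu(E)$ are precisely the simple objects: any nonzero morphism $A\to B$ between stable sheaves of the same slope is either $0$ or an isomorphism (in codimension one, hence an isomorphism of the reflexive hulls, but for the purposes of the graded object the subquotients may be taken up to the ambiguity already present). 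So the filtration in the statement is a composition series in this abelian category, and the assertion is exactly the Jordan--Hölder theorem for abelian categories of finite length.

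Concretely, I would argue by induction on the length $b$. Given two such filtrations $0=F_0\subset\cdots\subset F_b=E$ and $0=G_0\subset\cdots\subset G_{b'}=E$, consider the first steps $F_1$ and $G_1$, both stable of slope $\mu(E)$. If $F_1=G_1$ as subsheaves, pass to the quotient $E/F_1$, which is again semistable of slope $\mu(E)$, and apply the inductive hypothesis to the induced filtrations. If $F_1\ne G_1$, look at the composite $F_1\hookrightarrow E\to E/G_1$; by simplicity of $F_1$ this map is either zero — forcing $F_1\subseteq G_1$, hence $F_1=G_1$ by stability and equality of slopes and ranks — or injective (as a map of sheaves, generically), so that $F_1$ maps isomorphically onto a stable subsheaf of $E/G_1$ of slope $\mu(E)$; symmetrically $G_1$ embeds in $E/F_1$. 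One then refines a Jordan--Hölder filtration of $E/(F_1+G_1)$ (noting $F_1+G_1$ is semistable of slope $\mu(E)$, with $(F_1+G_1)/F_1\cong G_1/(F_1\cap G_1)$ and, since $F_1\cap G_1$ has slope $\le\mu$ and is a subsheaf of the stable $F_1$, either $F_1\cap G_1=0$ or $F_1=G_1$) to obtain a common refinement argument: the multiset of graded pieces coming through $F_1$ and the one coming through $G_1$ are seen to agree after passing to $E/(F_1+G_1)$. Atiyah's Theorem \ref{thm1} is not needed; what replaces it is the uniqueness part of Jordan--Hölder, which follows purely formally once simplicity of the stable subquotients is in hand.

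The one genuine subtlety is that the $F_i$ are only reflexive subsheaves, not subbundles, so "stable of slope $\mu(E)$" interacts with the operations $F_1+G_1$, $F_1\cap G_1$, and the passage to quotients only up to issues supported in codimension $\ge 2$ (for sums/intersections) and codimension $\ge 1$ (the quotients $E/F_i$ need not be torsionfree). This is handled by working systematically with reflexive hulls and with the fact that degree, slope, stability and semistability of a coherent sheaf depend only on its behaviour in codimension one — so replacing any sheaf that appears by its reflexive hull, or by its quotient by torsion, changes neither its slope nor the isomorphism class of the relevant stable subquotients. I expect verifying that the sum $F_1+G_1\subset E$ and the intersection $F_1\cap G_1$ remain semistable of slope $\mu(E)$, and that the resulting short exact sequences of (hulls of) these sheaves behave additively on degrees, to be the main technical point; everything else is the formal Jordan--Hölder bookkeeping. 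A cleaner alternative, which I would actually carry out, is to cite the abelian-category framework directly: the full subcategory of semistable sheaves of slope $\mu(E)$ inside the category of torsionfree sheaves (or coherent sheaves modulo codimension-two) is abelian, Noetherian and Artinian with the stable sheaves of slope $\mu(E)$ as its simple objects, and then Proposition \ref{prop1} is immediate from the standard Jordan--Hölder theorem, exactly as in \cite[Proposition 1.5.2]{HL}.
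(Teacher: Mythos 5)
Your proposal coincides with the paper's treatment: the paper offers no independent argument for Proposition \ref{prop1}, but simply cites the Jordan--H\"older theorem for semistable sheaves (\cite[Proposition 1.5.2]{HL}, adapted to $\mu$--semistability via \cite[Theorem 1.6.6]{HL}, together with \cite{Ko2}, \cite{BDL}, \cite{BTT}), which is exactly the argument you sketch and the reference you ultimately invoke. Your outline of the induction, the simplicity of stable sheaves of the given slope, and the codimension-two caveats handled by reflexive hulls is a faithful account of how those cited proofs proceed.
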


\begin{definition}\label{def1}
The stable sheaves $F_i/F_{i-1}$, $1\, \leq\, i\, \leq\, b$, in Proposition \ref{prop1}
will be called the \textit{stable graded factors} of $E$.
\end{definition}
 
\section{Homomorphisms of finite bundles}

\begin{lemma}\label{lem-1}
Let $E$ be a finite vector bundle on $M$ and
$$
\phi\, \in\, H^0(M,\, E)\setminus \{0\}
$$
a nonzero holomorphic section. Then $\phi$ does not vanish at any point of $M$.
\end{lemma}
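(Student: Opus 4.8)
The plan is to pit finiteness against the finiteness of cohomology. If the nonzero section $\phi$ had a zero, then all of its tensor powers $\phi^{\otimes n}\in H^0(M,E^{\otimes n})$ would be nonzero, yet each would be a section of one of the fixed, finitely many bundles $V_i$ occurring in \eqref{e1}, vanishing along a fixed analytic set to order at least $n$; finite-dimensionality of the relevant cohomology forbids this once $n$ is large.

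Here are the steps. First suppose, for a contradiction, that the zero set $Z:=\{x\in M\ \mid\ \phi(x)=0\}$ is nonempty; it is a proper analytic subset of $M$, and write $\mathcal I_Z\subseteq\mathcal O_M$ for its reduced ideal sheaf. Since $E$ is finite, fix holomorphic vector bundles $V_1,\dots,V_p$ such that for every $n\geq1$ there is an $\mathcal O_M$-linear isomorphism $E^{\otimes n}\cong\bigoplus_{i=1}^p V_i^{\oplus n_i}$ with nonnegative integers $n_i$ (depending on $n$), as in \eqref{e1}. For each $n$ the section $\phi^{\otimes n}$ is nonzero, because it is nonzero at every point at which $\phi$ is nonzero. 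In a local holomorphic frame of $E$ the coefficients of $\phi$ lie in $\mathcal I_Z$ (as $\phi$ vanishes on $Z$), so the coefficients of $\phi^{\otimes n}$, being $n$-fold products of these, lie in $\mathcal I_Z^{n}$; thus $\phi^{\otimes n}$ is a section of the subsheaf $\mathcal I_Z^{n}\cdot E^{\otimes n}\subseteq E^{\otimes n}$. An $\mathcal O_M$-linear isomorphism carries $\mathcal I_Z^{n}\cdot E^{\otimes n}$ onto $\bigoplus_{i=1}^p (V_i\otimes\mathcal I_Z^{n})^{\oplus n_i}$, so $\phi^{\otimes n}$ produces a nonzero element of $H^0(M,V_i\otimes\mathcal I_Z^{n})$ for at least one $i$.

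Next, the decisive observation: for each fixed $i$ the spaces $H^0(M,V_i\otimes\mathcal I_Z^{n})$, $n\geq1$, form a decreasing chain of finite-dimensional vector spaces (finite-dimensional because $M$ is compact), and their intersection is zero. Indeed a section lying in all of them has, at each $x\in Z$, germ contained in $\bigcap_{n\geq1}\mathcal I_{Z,x}^{n}\cdot(V_i)_x=0$ by Krull's intersection theorem, hence vanishes on a neighbourhood of $Z$, hence identically on the connected manifold $M$ by the identity theorem. A decreasing chain of finite-dimensional spaces with zero intersection is eventually zero, so $H^0(M,V_i\otimes\mathcal I_Z^{n})=0$ for $n$ large. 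Taking $n$ large enough to kill all $p$ of these groups forces $\phi^{\otimes n}=0$, contradicting $\phi^{\otimes n}\neq0$. Therefore $Z=\emptyset$, i.e.\ $\phi$ is nowhere zero.

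The only step requiring a real argument is the vanishing of $H^0(M,V_i\otimes\mathcal I_Z^{n})$ for large $n$, and that in turn rests only on compactness and the identity theorem; everything else is bookkeeping around the crucial feature of \eqref{e1}, that the bundles $V_1,\dots,V_p$ can be chosen independently of $n$. In particular this proof uses neither (semi)stability nor any choice of Gauduchon metric.
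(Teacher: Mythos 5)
Your proof is correct and is essentially the paper's argument: both exploit that $\phi^{\otimes n}$ vanishes to order at least $n$ yet must project nontrivially onto one of the finitely many fixed bundles $V_i$ from \eqref{e1}, while finite-dimensionality of $H^0(M,V_i)$ (via a decreasing chain of subspaces with zero intersection) caps how far a nonzero section can vanish. The only cosmetic difference is that the paper measures vanishing order at a single point $x_0$ of the zero locus, using the filtration by order of vanishing there, whereas you use powers of the ideal sheaf of the whole zero set $Z$ together with Krull's intersection theorem; the pointwise version suffices and is slightly lighter.
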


\begin{proof}
Assume that $\phi$ vanishes at a point $x_0\, \in\, M$. Then
$$
\phi^{\otimes j}\, \in\, H^0(M,\, E^{\otimes j})
$$
vanishes at $x_0$ of order at least $j$.

Take any holomorphic vector bundles $A$ on $M$. For any nonzero holomorphic
section $h\, \in\, H^0(M,\, A)\setminus\{0\}$, let ${\rm Ord}_A(h,x_0)$ denote the order of 
vanishing of $h$ at $x_0$. It can be shown that the set of all such integers
$$\{{\rm Ord}_A(h,x_0)\}_{h\in H^0(M, A)\setminus\{0\}}$$ is bounded above. Indeed, the
space of holomorphic sections of $A$ vanishing at $x_0$ of order at least $k$ is a subspace
of the finite dimensional vector space $H^0(M,\, A)$, and $H^0(M,\, A)$ is filtered by such
subspaces.

Consider the decomposition of $E^{\otimes j}$ in \eqref{e1}; fix an isomorphism of
$E^{\otimes j}$ with the direct sum. Take any
$1\, \leq\, k \, \leq\, p$ such that
\begin{enumerate}
\item $j_k\, >\, 0$, and

\item the projection of the section $\phi^{\otimes j}$ to some component $V_k$
of $E^{\otimes j}$ (of the $j_k$ components) is nonzero.
\end{enumerate}
Since $\phi^{\otimes j}$
vanishes at $x_0$ of order at least $j$, the projection of $\phi^{\otimes j}$ to any
component of $E^{\otimes j}$ vanishes at $x_0$ of order at least $j$ (if the projection
does not vanish identically). But this contradicts the above observation that the
orders of vanishing, at $x_0$, of the nonzero sections of a given holomorphic vector
bundle are bounded above. This proves that $\phi$ does not vanish at any point of $M$.
\end{proof}

\begin{proposition}\label{prop2}
Let $E$ and $F$ be finite holomorphic vector bundles on $M$, and let
$$
\Phi\, :\, E\, \longrightarrow\, F
$$
be an ${\mathcal O}_M$--linear homomorphism. Then the image $\Phi(E)$ is a subbundle of $F$.
\end{proposition}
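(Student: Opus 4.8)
The plan is to reduce the statement to a pointwise rank condition and then exploit the homogeneity of finite bundles. Concretely, the image $\Phi(E)$ fails to be a subbundle of $F$ exactly where the rank of the fiberwise map $\Phi_x : E_x \to F_x$ drops below its generic value $r$. So the goal is to show that $x \mapsto \operatorname{rank}(\Phi_x)$ is constant on $M$. The natural device is to pass to exterior powers: $\Lambda^r \Phi : \Lambda^r E \to \Lambda^r F$ induces, equivalently, a nonzero holomorphic section of the bundle $\operatorname{Hom}(\Lambda^r E,\, \Lambda^r F) \cong (\Lambda^r E)^* \otimes \Lambda^r F$, and $\operatorname{rank}(\Phi_x) \geq r$ if and only if this section is nonzero at $x$. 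Thus $\{x : \operatorname{rank}(\Phi_x) < r\}$ is precisely the vanishing locus of a holomorphic section of $(\Lambda^r E)^* \otimes \Lambda^r F$.

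The key point is then that this bundle is finite. Indeed, $\Lambda^r E$ is a component of $E^{\otimes r}$ (via symmetrization/antisymmetrization idempotents, or simply because $E^{\otimes r}$ splits as a direct sum of Schur functors of $E$ over $\mathbb{C}$), hence $\Lambda^r E$ is finite by the remarks in Section 2 that components of finite bundles are finite; likewise $\Lambda^r F$ is finite, and so is its dual $(\Lambda^r F)^*$. Since tensor products and the dual of finite bundles are finite, $(\Lambda^r E)^* \otimes \Lambda^r F$ is finite. Now apply Lemma \ref{lem-1}: a nonzero holomorphic section of a finite bundle vanishes nowhere. Therefore $\operatorname{rank}(\Phi_x) \geq r$ for every $x \in M$, and since $r$ is the generic (hence maximal) rank, $\operatorname{rank}(\Phi_x) = r$ for all $x$. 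A holomorphic bundle map of locally constant rank has image a subbundle (locally one can choose frames in which $\Phi$ is in normal form), so $\Phi(E) \subset F$ is a subbundle, as claimed.

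The main obstacle, and the only place any real care is needed, is the reduction to Lemma \ref{lem-1}: one must verify that $\Lambda^r E$ (equivalently $\Lambda^r F$) genuinely occurs as a direct summand of a tensor power of $E$ so that finiteness transfers. Over $\mathbb{C}$ this is standard — the Young symmetrizers give $E^{\otimes r} \cong \bigoplus_\lambda (\mathbb{S}_\lambda E)^{\oplus d_\lambda}$ with $\Lambda^r E = \mathbb{S}_{(1^r)} E$ appearing — but since the paper works with holomorphic (not merely topological) bundles, one should note that these idempotent decompositions are $\operatorname{GL}$-equivariant and hence holomorphic, so $\Lambda^r E$ is holomorphically a component of $E^{\otimes r}$. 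Alternatively, and perhaps more cleanly, one can avoid exterior powers: the section $\phi := \Phi \in H^0(M,\, E^* \otimes F)$ is nowhere zero by Lemma \ref{lem-1} (note $E^* \otimes F$ is finite), which already forces $\operatorname{rank}(\Phi_x) \geq 1$ everywhere — but to pin down the full generic rank one really does need the $r$-th exterior power argument above, applied to the finite bundle $(\Lambda^r E)^* \otimes \Lambda^r F$ and its nonzero section $\Lambda^r \Phi$. Everything else is routine: the identification of the degeneracy locus with a vanishing locus, and the local normal form for a constant-rank morphism.
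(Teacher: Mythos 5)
Your proposal is correct and follows essentially the same route as the paper's own proof: pass to the $r$-th exterior power, view $\bigwedge^r\Phi$ as a nowhere-vanishing section of the finite bundle $(\bigwedge^r E)^*\otimes \bigwedge^r F$ (finiteness coming from $\bigwedge^r E$ being a direct summand of $E^{\otimes r}$), and invoke Lemma \ref{lem-1}. Your extra remarks on the Schur-functor decomposition just make explicit what the paper states in one line, so there is nothing further to add.
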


\begin{proof}
Let $r$ be the rank of $\Phi(E)$. We assume that $r\, >\, 0$, because the proposition is
obvious for $r\,=\, 0$. Consider the holomorphic vector bundle
$$
\text{Hom}(\bigwedge\nolimits^r E, \, \bigwedge\nolimits^r F)\,=\,
(\bigwedge\nolimits^r F)\otimes (\bigwedge\nolimits^r E)^*\, .$$
Since $F$ is a finite vector bundle, we know that $\bigotimes^r F$ is also finite. Therefore,
the direct summand $\bigwedge^r F$ of $\bigotimes^r F$ is also finite. Similarly,
$(\bigwedge^r E)^*$ is also finite. Consequently, $\text{Hom}(\bigwedge\nolimits^r E, \,
\bigwedge\nolimits^r F)$ is a finite bundle.

Let
$$
\widehat{\Phi}\, :=\, \bigwedge\nolimits^r \Phi\, \in \, H^0(M,\, \text{Hom}(
\bigwedge\nolimits^r E, \, \bigwedge\nolimits^r F))
$$
be the homomorphism of $r$--th exterior products corresponding to $\Phi$. Since the
rank of $\Phi(E)$ is $r$, the rank of the
subsheaf $\widehat{\Phi}(\bigwedge\nolimits^r E)\, \subset\,
\bigwedge\nolimits^r F$ is one, and the section
$\widehat{\Phi}$ vanishes exactly on the closed subset of $M$ over which $\Phi(E)$ fails to be
a subbundle of $F$. But $\widehat{\Phi}$ does not vanish anywhere by Lemma \ref{lem-1}.
Consequently, $\Phi(E)$ is a subbundle of $F$.
\end{proof}

\begin{lemma}\label{lem1}
Let $E$ be a finite vector bundle on $M$. Then $E$ is semistable, and
$$
{\rm degree}(E)\,=\, 0\, .
$$
\end{lemma}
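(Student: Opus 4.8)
The plan is to prove both assertions of Lemma~\ref{lem1} together, using the finiteness structure of $E$ together with the properties of semistable sheaves recorded in Section~2. First I would observe that the degree statement follows once semistability is known. Indeed, if $E$ is finite then so are all the tensor powers $E^{\otimes j}$, and by \eqref{e1} each $E^{\otimes j}$ decomposes as a direct sum of the fixed bundles $V_1,\, \cdots,\, V_p$. If every $V_i$ (hence every $E^{\otimes j}$) were semistable, then $\mu(E^{\otimes j})\,=\, j\cdot\mu(E)$ would have to take values in the finite set $\{\mu(V_1),\, \cdots,\, \mu(V_p)\}$ as $j$ ranges over all positive integers; this forces $\mu(E)\,=\, 0$, i.e. ${\rm degree}(E)\,=\, 0$. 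So the heart of the matter is semistability.

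To prove semistability I would argue by contradiction using the maximal destabilizing subsheaf. Suppose $E$ is not semistable, so $\mu_{\rm max}(E)\,>\,\mu(E)$. The key point is the tensorial behaviour of $\mu_{\rm max}$: for the Gauduchon degree one has $\mu_{\rm max}(E_1\otimes E_2)\,\geq\,\mu_{\rm max}(E_1)+\mu_{\rm max}(E_2)$, since the tensor product of the maximal destabilizing subsheaves injects into $E_1\otimes E_2$ with slope $\mu_{\rm max}(E_1)+\mu_{\rm max}(E_2)$. Iterating, $\mu_{\rm max}(E^{\otimes j})\,\geq\, j\cdot\mu_{\rm max}(E)$. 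On the other hand, $\mu_{\rm max}(E^{\otimes j})$ is the slope of some subsheaf of $E^{\otimes j}$, and by Theorem~\ref{thm1} and \eqref{e1} the indecomposable summands of $E^{\otimes j}$, for all $j$, lie among the finitely many indecomposable components of $V_1\oplus\cdots\oplus V_p$; hence there is a constant $C$, depending only on $E$, with $\mu_{\rm max}(E^{\otimes j})\,\leq\, C$ for all $j\,\geq\,1$. (This is where I would be careful: I need an upper bound on $\mu_{\rm max}$ of a direct sum in terms of the summands, namely $\mu_{\rm max}(A\oplus B)\,=\,\max\{\mu_{\rm max}(A),\,\mu_{\rm max}(B)\}$, which follows from the uniqueness of the Harder--Narasimhan filtration.) Combining $j\cdot\mu_{\rm max}(E)\,\leq\, C$ for all $j$ with $\mu_{\rm max}(E)\,>\,\mu(E)\,=\,0$ — the last equality because $\det E$ is itself finite, hence a finite-order line bundle, hence of degree $0$ — gives a contradiction for $j$ large. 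Therefore $E$ is semistable.

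Once semistability is in hand, the degree computation is the short argument sketched above: $E^{\otimes j}$ is semistable (being a direct sum of the semistable $V_i$'s — each $V_i$ is a finite bundle and hence semistable by what we have just proved, applied to $V_i$ in place of $E$, noting these are genuinely smaller data), so its slope $j\cdot\mu(E)$ is bounded independently of $j$, forcing $\mu(E)\,=\,0$ and thus ${\rm degree}(E)\,=\,0$. Alternatively, and more directly: $\det E$ is a finite line bundle, so it is of finite order, so a suitable positive power of it is trivial, whence ${\rm degree}(\det E)\,=\,0\,=\,{\rm degree}(E)$.

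The main obstacle I anticipate is the superadditivity $\mu_{\rm max}(E_1\otimes E_2)\,\geq\,\mu_{\rm max}(E_1)+\mu_{\rm max}(E_2)$ in the Gauduchon (non-K\"ahler) setting: one needs to know that the tensor product $F_1\otimes F_2$ of the maximal destabilizing subsheaves is again a torsionfree coherent sheaf mapping into $E_1\otimes E_2$ with the expected slope, and that degree is additive under tensor product of sheaves for the Gauduchon degree — both of which hold but should be cited. The uniform bound on $\mu_{\rm max}(E^{\otimes j})$ via Atiyah's theorem (Theorem~\ref{thm1}) is the other ingredient that must be stated carefully, but it is really just the observation that only finitely many indecomposable bundles occur among all the $E^{\otimes j}$.
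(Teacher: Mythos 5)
Your proof is correct, but it reaches semistability by a different mechanism than the paper. For the degree, the paper simply notes that $\mu(E^{\otimes j})=j\cdot\mu(E)$ equals a weighted average of the finitely many slopes $\mu(V_i)$, hence is bounded in $j$, forcing $\mu(E)=0$; your alternative via $\det E$ being a finite, hence finite-order, line bundle is equally valid and arguably cleaner (your first sketch, which presupposes semistability of the $V_i$, is the roundabout one). For semistability, the paper takes a destabilizing subsheaf $W\subset E$ of rank $s$ with $\mu(W)>0$ and tracks the \emph{line bundle} $(\det W)^{\otimes j}\hookrightarrow E^{\otimes sj}$: a nonzero map from a line bundle into some $V_i$ bounds $js\cdot\mu(W)$ by $\max_i\mu_{\rm max}(V_i)$, a contradiction. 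You instead track $\mu_{\rm max}(E^{\otimes j})$ itself, using the superadditivity $\mu_{\rm max}(E_1\otimes E_2)\ge\mu_{\rm max}(E_1)+\mu_{\rm max}(E_2)$ and the identity $\mu_{\rm max}(A\oplus B)=\max\{\mu_{\rm max}(A),\mu_{\rm max}(B)\}$. Both arguments rest on the same principle (a quantity growing linearly in $j$ cannot stay in a bounded set determined by $V_1,\dots,V_p$), but the paper's reduction to the determinant line bundle sidesteps exactly the technical points you flag: additivity of the Gauduchon degree under tensor products of higher-rank torsionfree sheaves and the behaviour of $F_1\otimes F_2$ inside $E_1\otimes E_2$ up to codimension-two loci. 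These points are standard (superadditivity of $\mu_{\rm max}$ is the easy direction, needing no Hermitian--Einstein input, provided you take the maximal destabilizing subsheaves to be saturated so that the kernel of $F_1\otimes F_2\to E_1\otimes E_2$ is supported in codimension at least two and does not affect the degree), so your route works; it just carries a little more sheaf-theoretic overhead than the paper's, in exchange for a statement ($\mu_{\rm max}(E^{\otimes j})\ge j\,\mu_{\rm max}(E)$) that is more broadly useful.
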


\begin{proof}
Consider the decomposition in \eqref{e1}. We have
$$
\mu(\bigoplus_{i=1}^p V^{\oplus j_i}_i)\,=\, \sum_{i=1}^p \frac{j_i\cdot
\text{rank}(V_i)}{\sum_{i=1}^p j_i\cdot \text{rank}(V_i)} \mu(V_i)\, ,
$$
in particular, $\text{Min}\{\mu(V_i)\}_{i=1}^p\, \leq\,
\mu(\bigoplus_{i=1}^p V^{\oplus j_i}_i)\, \leq\, \text{Max}\{\mu(V_i)\}_{i=1}^p$.
Hence from \eqref{e1} it follows that $\{\mu(E^{\otimes j})\}_{j=1}^\infty$ is
bounded. On the other hand,
$$
\mu(E^{\otimes j})\,=\, j\cdot \mu(E)\, .
$$
Therefore, we have $\mu(E)\,=\, 0$. This implies that ${\rm degree}(E)\,=\, 0$.

If $E$ is not semistable, take any subsheaf $W\, \subset\, E$ such that $\mu(W)\, >\, 
\mu(E)\,=\, 0$. So $\det (W)$ is a subsheaf of $\bigwedge^s E \, \subset \, \bigotimes^s E$, 
where $s$ is the rank of $W$. Hence the line bundle $(\det (W))^{\otimes j}$ is a subsheaf of 
$\bigotimes^{s\cdot j} E$.
Now from \eqref{e1} it follows that $(\det (W))^{\otimes j}$ is a subsheaf of some $V_i$
for some $i\, \in\, \{1, \cdots,\, p\}$ (the projection of $(\det (W))^{\otimes j}$ to one
of the direct summands in \eqref{e1} has to be nonzero). This implies that
$$
\mu((\det (W))^{\otimes j})\,\leq\, \text{Max}\{\mu_{\rm max}(V_1),\, \cdots,\,
\mu_{\rm max}(V_p)\}\, .
$$
Consequently, the collection of real numbers $\{\mu((\det (W))^{\otimes j})\}_{j=1}^\infty$
is bounded above.

On the other hand, we have $\mu((\det (W))^{\otimes j})\,=\, js\cdot\mu(W)$. Since $\mu(W) \, 
>\, 0$, this contradicts the above observation that $\{\mu((\det (W))^{\otimes 
j})\}_{j=1}^\infty$ is bounded above. Therefore, $E$ is semistable.
\end{proof}

\begin{proposition}\label{prop3}
Let $E$ be a finite vector bundle on $M$, and let $Q$ be a torsionfree quotient of $E$
of degree zero. Then $Q$ is locally free.
\end{proposition}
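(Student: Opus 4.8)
The plan is to reduce, by induction on $\mathrm{rank}(Q)$ together with the Jordan--H\"older formalism on the Gauduchon manifold $M$, to the case of a \emph{stable} quotient, and then to finish by showing that a line bundle attached to $Q$ has finite order. Set $r=\mathrm{rank}(Q)$ (the case $r=0$ is trivial) and $S=\ker(E\to Q)$. By Lemma~\ref{lem1} we have $\mathrm{degree}(E)=0$, hence $\mathrm{degree}(S)=0$; since $Q$ is torsionfree, $S$ is saturated in $E$, so it is reflexive, and both $S$ and $Q$ are semistable of slope $0$. Choose a filtration $0=E_0\subset\cdots\subset E_k=S\subset E_{k+1}\subset\cdots\subset E_\ell=E$ by saturated subsheaves with each $E_i/E_{i-1}$ stable of slope $0$. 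For $k<i<\ell$ the sheaf $E/E_i$ is a degree-zero torsionfree quotient of $E$ of rank $<r$, hence locally free by the inductive hypothesis; so each $E_i$ with $i>k$ is a subbundle of $E$, and the sequence $0\to E_{k+1}/E_k\to Q\to E/E_{k+1}\to 0$ presents $Q$ as an extension of the locally free sheaf $E/E_{k+1}$ by $P:=E_{k+1}/E_k$. Since an iterated extension of locally free sheaves is locally free, it suffices to prove $P$ is locally free; $P$ is stable of slope $0$ and is a degree-zero torsionfree quotient of $E_{k+1}$, of rank $<r$ when $Q$ is not stable, which reduces us to the case $Q$ stable. (The one non-formal point is that $E_{k+1}$ need not itself be finite, so the inductive hypothesis does not literally apply to the pair $(E_{k+1},P)$; this is part of the obstacle below.) Assume now $Q$ stable, and put $L:=\det Q=(\bigwedge^rQ)^{**}$, a line bundle with $\mathrm{degree}(L)=\mathrm{degree}(Q)=0$.

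Next I would show that $L$ has finite order. Applying $\bigwedge^r(-)$ to $q$ and dualizing gives an injection $L^{-1}=(\bigwedge^rQ)^*\hookrightarrow(\bigwedge^rE)^*=\bigwedge^r(E^*)$; since $E^*$ is finite, $\bigwedge^r(E^*)$ is finite, hence semistable of slope $0$, and because $L^{-1}$ is locally free of rank one and of degree zero its saturation in $\bigwedge^r(E^*)$ — a degree-zero line subbundle containing $L^{-1}$ — must equal $L^{-1}$, so $L^{-1}$ is a subbundle. Tensoring, $L^{-k}$ is for every $k\ge 1$ a saturated, stable, degree-zero line subsheaf of the finite bundle $(\bigwedge^rE^*)^{\otimes k}$, hence occurs among its stable graded factors (Proposition~\ref{prop1}, Definition~\ref{def1}). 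But all tensor powers of $E^*$ are finite, so by Theorem~\ref{thm1} they involve only finitely many indecomposable summands, and therefore only finitely many isomorphism classes of stable graded factors occur among all the $(\bigwedge^rE^*)^{\otimes k}$. Thus $\{L^{-k}\}_{k\ge 1}$ is a finite set, $L^{k_1}\cong L^{k_2}$ for some $k_1<k_2$, and $L$ has finite order; in particular $L$ is finite.

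It remains to deduce that $Q$ is locally free. When $r=1$ this is immediate: $Q\hookrightarrow Q^{**}=L$, and $E\xrightarrow{q}Q\hookrightarrow L$ is a nonzero global section of $\mathcal{H}om(E,L)=E^*\otimes L$, which is finite (a tensor product of finite bundles), so by Lemma~\ref{lem-1} it vanishes nowhere, $E\to L$ is surjective, and $Q=\mathrm{image}(E\to L)=L$ is locally free. For general stable $Q$ the same reasoning shows that the section $\bigwedge^rE\to\bigwedge^rQ\to L$ of the finite bundle $(\bigwedge^rE)^*\otimes L$ vanishes nowhere, hence $\bigwedge^rE\to L$ is surjective, i.e.\ $(\bigwedge^rQ)/(\mathrm{torsion})=L$ is reflexive, which rules out the lowest-order local obstruction to freeness of $Q$. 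The main obstacle is closing this last gap for $\mathrm{rank}(Q)\ge 2$: proving that a \emph{stable} degree-zero torsionfree quotient of a finite bundle is locally free (and that the $E_{k+1}$ of the first paragraph may be taken finite). Here I expect one must bring in the Hermitian--Einstein theory on $M$: $Q^{**}$ is a stable reflexive sheaf of slope $0$ whose determinant $\det Q$ now has finite order — hence carries a flat unitary connection — so $Q^{**}$ admits an admissible Hermitian--Einstein metric; and since $Q^{**}$ agrees off a set of codimension $\ge 2$ with a quotient of the finite bundle $E$, a Bogomolov-type Chern number identity should force that metric to be flat, so that $Q^{**}$ is locally free and, applying the nowhere-vanishing argument to $q^{**}\colon E\to Q^{**}$, equals $Q$.
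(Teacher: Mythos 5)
Your proposal correctly carries out two of the three steps of the paper's actual argument: you prove that $L=\det Q$ has finite order (by a subsheaf argument dual to, but essentially equivalent to, the paper's quotient argument, which uses that each $V_i$ has only finitely many rank-one degree-zero torsionfree quotients), and you use Lemma~\ref{lem-1} to show that the induced map $\bigwedge^r E\longrightarrow \det Q$ is everywhere surjective. But the gap you flag at the end is real, and it is exactly where the one genuinely new idea of the paper's proof lives. The paper closes it with the Pl\"ucker embedding $\beta\colon \mathrm{Gr}(E)\hookrightarrow \mathbb{P}(\bigwedge^r E)$ of the Grassmann bundle of rank-$r$ quotients of the fibers of $E$: the surjection $\varphi\colon\bigwedge^r E\longrightarrow\det Q$ defines a holomorphic section $\sigma$ of $\mathbb{P}(\bigwedge^r E)$ (sending $x$ to the hyperplane $\ker\varphi_x$); over the open set $U$ where $Q$ is locally free one has $\sigma(x)=\beta(q_x)$, so $\sigma(U)\subset\beta(\mathrm{Gr}(E))$, and since $\beta(\mathrm{Gr}(E))$ is closed and $U$ is dense, $\sigma(M)\subset\beta(\mathrm{Gr}(E))$. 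This says precisely that $\varphi$ is induced fibrewise by a rank-$r$ quotient bundle of $E$, which coincides with $Q$ on $U$ and hence everywhere. No reduction to stable $Q$, and no Hermitian--Einstein theory, is needed.

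Two further comments. First, your opening reduction to the stable case is both unnecessary and, as you yourself note, incomplete: $E_{k+1}$ need not be finite, so the induction does not close; the paper works directly with $\det Q$ for arbitrary torsionfree $Q$ of degree zero. Second, the escape route you sketch for the remaining gap --- an admissible Hermitian--Einstein metric on $Q^{**}$ plus a Bogomolov-type Chern-number identity forcing flatness --- is unlikely to be available in this generality: on a compact complex manifold carrying only a Gauduchon metric, the relevant second Chern integrals are not well defined independently of the metric, and reinstating such arguments is essentially what forced the astheno-K\"ahler hypothesis in \cite{BP} that this paper is written to remove.
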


\begin{proof}
The open subset of $M$ over which $Q$ is locally free will be denoted by $U$.
The complement $M\setminus U$ is a complex analytic subspace of $M$ of codimension at least two;
this is because $Q$ is torsionfree.

Let $r$ be the rank of $Q$. The quotient map $E\, \longrightarrow\, Q$ produces an
${\mathcal O}_M$--linear homomorphism
\begin{equation}\label{e2}
\varphi\, :\, \bigwedge\nolimits^r E
 \, \longrightarrow\, \det (Q)\, .
\end{equation}
Note that the restriction $\varphi\vert_U\, :\, 
(\bigwedge^r E)\vert_U \, \longrightarrow\, (\det (Q))\vert_U$ is surjective, because
we have $(\det (Q))\vert_U\,=\, \bigwedge^r (Q\vert_U)$.

We shall first show that the holomorphic line bundle $\det (Q)$ is of finite order.

The image $\varphi(\bigwedge\nolimits^r E)\, \subset\, \det (Q)$ will be denoted by $L$,
where $\varphi$ is the homomorphism in \eqref{e2}. 
Hence $L^j$ is a quotient of $(\bigwedge^r E)^{\otimes j}$. Since $(\bigwedge^r E)^{\otimes 
j}$ is a component of $E^{\otimes rj}$ (see Definition \ref{def0}), from Theorem \ref{thm1} 
and \eqref{e1} we conclude that $(\bigwedge^r E)^{\otimes j}$ is a direct sum of copies of 
$V_1,\, \cdots,\, V_p$ (recall that $V_1,\, \cdots,\, V_p$ are assumed
to be indecomposable). The vector 
bundles $V_1,\, \cdots,\, V_p$ are semistable of degree zero by Lemma \ref{lem1}, and hence 
using Proposition \ref{prop1} it follows that the stable quotients of degree zero of a direct 
sum of copies of $V_1,\, \cdots,\, V_p$ are just the stable quotients of degree zero of some 
$V_i$ occurring in the direct sum $\bigoplus_{i=1}^p V_i$.

Since $\text{degree}(L^j/\text{Torsion}) \,=\, j\cdot\text{degree}(\det (Q))\,=\, 0$, and
$L^j/\text{Torsion}$ is a quotient of a direct sum of copies of $V_1,\, 
\cdots,\, V_p$, we conclude that $L^j/\text{Torsion}$ is a quotient of some $V_i$, $1\,\leq\, 
i\,\leq\, p$. Each $V_i$ has only finitely many torsionfree quotients of rank one and degree 
zero (see Proposition \ref{prop1}); this can also be deduced from \cite[p.~1034, Proposition 
3.1]{BDL} (this proposition says that the semistable vector bundle $V^*_i$ admits only finitely many 
isomorphisms classes of reflexive stable subsheaves of degree zero). Consequently, we have 
$L^{\otimes a}\vert_U\,=\, L^{\otimes b}\vert_U$ for some $a\, \not=\, b$. This implies that 
$\det (Q)^{\otimes a}\,=\, \det(Q)^{\otimes b}$, because the complement $M\setminus U$ is a complex 
analytic subspace of $M$ of codimension at least two, and $L\vert_U\, =\, \det(Q)\vert_U$.
Therefore, the line bundle $\det (Q)$ is of finite order.

Since $\det (Q)$ is a finite bundle, from Proposition \ref{prop2} we conclude that the
homomorphism $\varphi$ in \eqref{e2} is surjective.

Consider the projective bundle ${\mathbb P}(\bigwedge\nolimits^r E)\, \longrightarrow\, M$
parametrizing the hyperplanes in the fibers of $\bigwedge\nolimits^r E$. Let $\text{Gr}(E)$
be the Grassmann bundle over $M$ parametrizing the $r$--dimensional quotients of the fibers
of $E$. Let
\begin{equation}\label{beta}
\beta\, :\, \text{Gr}(E)\, \longrightarrow\,{\mathbb P}(\bigwedge\nolimits^r E)
\end{equation}
be the Pl\"ucker embedding that sends a quotient $\widehat{q}_x \, :\, E_x\, 
\longrightarrow\, {\mathcal Q}_x$ of dimension $r$ to the kernel of the homomorphism 
$\bigwedge^r \widehat{q}_x \, :\, \bigwedge^r E_x\, \longrightarrow\, \bigwedge^r {\mathcal 
Q}_x$.

Let
$$
\sigma\, :\, M\, \longrightarrow\, {\mathbb P}(\bigwedge\nolimits^r E)
$$
be the holomorphic section
defined by the surjective homomorphism $\varphi$ in \eqref{e2}. The image of
the restriction $\sigma\vert_U$ lies in $\beta(\text{Gr}(E)\vert_U)\, \subset\,
{\mathbb P}(\bigwedge\nolimits^r E)$, where $\beta$ is the map in \eqref{beta}. Consequently,
$\sigma(M)$ lies in $\beta(\text{Gr}(E))$, because $\beta(\text{Gr}(E))$ is a closed submanifold
of ${\mathbb P}(\bigwedge\nolimits^r E)$.

Since $\sigma(M)$ lies in $\beta(\text{Gr}(E))$, it is evident that $Q$ is a quotient bundle of $E$.
\end{proof}

\section{Flat connection on finite bundles}

A holomorphic vector bundle $E$ over $M$ would be called \textit{\'etale trivial} if there is 
a finite connected \'etale covering
$$
\varpi\, :\, Y\, \longrightarrow\, M
$$
such that the vector bundle $\varpi^* E$ is holomorphically trivializable.

\begin{proposition}\label{prop4}
Let $E$ be a finite stable vector bundle on $M$. Then $E$ is
\'etale trivial.
\end{proposition}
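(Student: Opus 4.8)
The plan is to build, from a finite stable bundle $E$, a finite group that will serve as the Galois group of the desired étale cover. First I would exploit finiteness to produce finitely many indecomposable bundles $V_1,\ldots,V_p$ as in \eqref{e1}, each of which is, by Lemma \ref{lem1}, semistable of degree zero; since $E$ is moreover stable, I would want each $V_i$ that actually occurs in a tensor power of $E$ to be stable as well. The key input is that tensor products of finite stable bundles decompose, by Corollary \ref{cor1} and Atiyah's Theorem \ref{thm1}, into a direct sum of finitely many stable summands drawn from a \emph{fixed finite list} of degree-zero stable bundles (closed under $\otimes$, $\oplus$-summands, and duals up to isomorphism). Call this finite set $\mathcal{S}=\{S_1=\mathcal O_M,\,S_2,\ldots,S_n\}$. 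The reason the list is finite is exactly that $E$ is finite: every $S_k$ is a stable summand of some $E^{\otimes j}$, hence a summand of some $V_i$, and the $V_i$ have only finitely many indecomposable (here, stable) summands by Theorem \ref{thm1}.

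Next I would assemble a Tannakian-type group directly. Let $\mathcal C$ be the full subcategory of holomorphic vector bundles on $M$ whose objects are the finite direct sums of members of $\mathcal S$; by the structural facts just recalled, $\mathcal C$ is closed under $\oplus$, $\otimes$, duals, and subobjects/quotients that are again in $\mathcal C$ (using Proposition \ref{prop2} and Proposition \ref{prop3} to guarantee that natural sub- and quotient objects are again \emph{bundles} of degree zero). Fix a base point $x_0\in M$ and the fiber functor $\omega\colon \mathcal C\to \mathrm{Vect}_{\mathbb C}$, $W\mapsto W_{x_0}$. The automorphism group scheme $G:=\underline{\mathrm{Aut}}^{\otimes}(\omega)$ is then an affine group scheme over $\mathbb C$; because $\mathcal C$ is generated under the tensor operations by the \emph{single} object $E$ and all its sub/quotients lie in the finite list $\mathcal S$, the group $G$ is \emph{finite} — this is the analogue of Nori's argument that a finite bundle generates a finite group scheme, and here the finiteness of the list $\mathcal S$ is what forces $G$ to be a finite (constant, since we are over $\mathbb C$ and everything is reduced) group. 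This $G$-action on the fibers is encoded by a representation $\rho\colon \pi_1(M,x_0)\to G$? — no: rather, $G$ itself \emph{is} the monodromy group, and the principal $G$-bundle over $M$ it classifies is the desired étale cover.

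Concretely, I would finish as follows. The finite group scheme $G$ together with the tautological $G$-bundle $\varpi\colon Y\to M$ (obtained as $\mathrm{Spec}$ of the $\mathcal O_M$-algebra $\bigoplus$ of the bundles in $\mathcal C$, or equivalently $\mathrm{Isom}^{\otimes}$ of the trivialized fiber functor) is a connected finite étale covering, and by construction every object of $\mathcal C$ — in particular $E$ — pulls back along $\varpi$ to a bundle on which $G$ acts so that $\varpi^*W\cong \omega(W)\otimes \mathcal O_Y$ is holomorphically trivial. Hence $\varpi^*E$ is holomorphically trivializable, which is exactly étale triviality of $E$.

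The step I expect to be the main obstacle is establishing that $\mathcal C$ is genuinely an abelian (indeed, Tannakian) tensor category over $\mathbb C$ with the fiber functor exact: one must know that kernels, images, and cokernels of morphisms between objects of $\mathcal C$ stay in $\mathcal C$, i.e.\ stay \emph{locally free of degree zero}. This is where Proposition \ref{prop2} (images of maps of finite bundles are subbundles) and Proposition \ref{prop3} (degree-zero torsionfree quotients of finite bundles are locally free) do the real work, together with the fact — following from semistability and Proposition \ref{prop1} — that any such sub/quotient is again a direct sum of members of the fixed finite list $\mathcal S$. Verifying that this category has finitely many isomorphism classes of indecomposable objects, so that the resulting group scheme is finite, is the second delicate point, and it rests squarely on the finiteness hypothesis on $E$ via Theorem \ref{thm1}.
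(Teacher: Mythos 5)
Your proposal follows essentially the same route as the paper: both extract a finite list of degree-zero stable bundles closed under the tensor operations (the paper's list being the $V_i$, their duals, and the indecomposable components of the $V_i\otimes V_j^*$), form the Tannakian category they generate with the fiber functor $W\mapsto W_{x_0}$, invoke the finiteness of the set of indecomposable objects to conclude via Nori's theorems that the resulting affine group scheme $\Gamma$ is finite, and realize the associated $\Gamma$-torsor as the finite \'etale cover trivializing $E$. The two points you flag as delicate (that kernels, images and cokernels stay in the category via Propositions \ref{prop2} and \ref{prop3}, and that the indecomposables form a finite set via Theorem \ref{thm1}) are exactly the points the paper's proof addresses, so no further comparison is needed.
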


\begin{proof}
Consider the vector bundles $V_1,\, \cdots,\, V_p$ in \eqref{e1}. As before, we assume that each
$V_i$ is indecomposable and it is a component of some $E^{\otimes j}$. From the given condition
that $E$ is stable
it follows that $E^{\otimes j}$ is polystable (see Corollary \ref{cor1}). Using Lemma \ref{lem1}
we conclude that the degree of $E^{\otimes j}$ is zero. Hence any component (see
Definition \ref{def0}) of $E^{\otimes j}$
is also polystable of degree zero. In particular, all $V_i$ are polystable of degree zero. Since each
$V_i$ is also indecomposable, it is stable of degree zero.

If $V_i$ is a component (see Definition \ref{def0}) of $E^{\otimes j}$, then $V^{\otimes n}_i$
is a component of $E^{\otimes nj}$, so $V^{\otimes n}_i$ is also a direct sum of copies of
$V_1,\, \cdots,\, V_p$.

Any nonzero homomorphism between two stable vector bundles of degree zero is an isomorphism. 
Take any reflexive subsheaf $S$ of degree zero of a direct sum $\mathcal U$ of copies of 
$V_1,\,\cdots,\, V_p$. Then $S$ is a subbundle of $\mathcal U$ by Proposition \ref{prop3}. 
Moreover, $S$ is a component of $\mathcal U$, because $\mathcal U$ is polystable of degree 
zero, and hence $S$ is a direct sum of copies of $V_1,\,\cdots,\, V_p$. Furthermore, any 
torsionfree quotient of degree zero of such a subsheaf $S$ is again a direct sum of copies of 
$V_1,\,\cdots,\, V_p$, because $S$ is polystable of degree zero. In particular, for any any homomorphism
$$
f\, :\, \bigoplus_{i=1}^p V^{\oplus c_i}_i\, \longrightarrow\, \bigoplus_{i=1}^p V^{\oplus d_i}_i\, ,
$$
both $\text{kernel}(f)$ and $\text{cokernel}(f)$ are direct sums of copies of $V_1,\,\cdots,\, V_p$.

Fix a point $x_0\, \in\, M$. Assign the vector space $F_{x_0}$ to any holomorphic vector
bundle $F$ on $M$. In view of the above observations, $V_1,\,\cdots,\, V_p$ produce a
Tannakian category (defined in \cite[p.~30]{No1}, \cite[p.~76]{No2}). Now using
\cite[p.~31, Theorem 1.1]{No1}, \cite[p.~77, Theorem 1.1]{No2}, this Tannakian category produces a
complex affine group-scheme. The isomorphism classes of indecomposable objects of this Tannakian category
are contained in the union of the following three:
\begin{enumerate}
\item $\{V_1,\, \cdots, \, V_p\}$,

\item $\{V^*_1,\, \cdots, \, V^*_p\}$, and

\item all indecomposable components (see Definition \ref{def0})
of all $V_i\otimes V^*_j$, $1\, \leq\, i,\, j\, \leq\, p$. Note that
$V_i\otimes V^*_j$ is polystable by Corollary \ref{cor1}; also, $\text{degree}(V_i\otimes V^*_j)
\,=\,0$, because $\text{degree}(V_i)\,=\,0$ for all $i$. So all indecomposable components of
$V_i\otimes V^*_j$ are stable of degree zero.
\end{enumerate}
Since the above union is a finite collection, from \cite[p.~31, Theorem 1.2]{No1},
\cite[p.~77, Theorem 1.2]{No2} it follows that the group-scheme defined by the above Tannakian category
is finite; this finite group will be denoted by $\Gamma$.

Fix a basis of $E_{x_0}$. Now the structure group $\text{GL}(r_E,{\mathbb C})$ of $E$,
where $r_E$ denotes the rank of $E$, has a reduction of structure group to this group $\Gamma$
\cite[p.~79, Theorem 2.9]{No2} (see also \cite[p.~34, Proposition 2.9]{No2}). Since our group
$\Gamma$ is finite, the proof of \cite[Theorem 2.9]{No2} remains valid without any
modification.

Therefore, there is a finite \'etale Galois covering $\varpi\, :\, \widehat{Y}\, 
\longrightarrow\, M$ with Galois group $\Gamma$ such that the vector bundle $\varpi^*E$ is 
holomorphically trivializable. Now taking $Y$ to be a connected component of $\widehat{Y}$ we 
conclude that $E$ is \'etale trivial.
\end{proof}

Let $E$ be a finite vector bundle over $M$. As in Proposition \ref{prop1}, let
\begin{equation}\label{e3}
0\,=\, F_0\, \subset\, F_1\, \subset\, \cdots \, \subset\, F_{m-1} \, \subset\, F_{m}\,=\, E
\end{equation}
be a filtration of reflexive subsheaves of $E$ such that $F_k/F_{k-1}$ is stable
of degree zero for all $1\, \leq\, k\, \leq\, m$. From Proposition \ref{prop3} we know
that each $F_k$ is a subbundle of $E$.

\begin{lemma}\label{lem2}
For every $1\, \leq\, k\, \leq\, m$, the holomorphic vector bundle $F_k/F_{k-1}$ in
\eqref{e3} is finite.
\end{lemma}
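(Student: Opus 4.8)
The plan is to show that each stable graded factor $F_k/F_{k-1}$ is a \emph{component} of a tensor power of $E$, which by the discussion following \eqref{e1} forces it to be finite. The starting observation is that $E$ is semistable of degree zero (Lemma \ref{lem1}), hence every tensor power $E^{\otimes j}$ is semistable of degree zero as well (semistability is preserved under tensor product for $\mu$-semistable bundles over a Gauduchon manifold, and degree is additive). Therefore the stable graded factors of $E^{\otimes j}$ are well defined by Proposition \ref{prop1}, and they are stable of degree zero.

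First I would analyze the stable graded factors of $E^{\otimes j}$ using the filtration \eqref{e3}. Tensoring \eqref{e3} with itself $j$ times and refining, one obtains a filtration of $E^{\otimes j}$ whose successive quotients are tensor products $(F_{k_1}/F_{k_1-1})\otimes\cdots\otimes(F_{k_j}/F_{k_j-1})$ of stable degree-zero bundles; each such tensor product is polystable of degree zero by Corollary \ref{cor1}, hence further filtering it by its own stable subsheaves shows that the stable graded factors of $E^{\otimes j}$ are exactly the stable graded factors (equivalently, the stable direct summands) of the bundles $\bigotimes_{\ell=1}^{j}(F_{k_\ell}/F_{k_\ell-1})$. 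In particular, taking $k_1=\cdots=k_j=k$ shows that $(F_k/F_{k-1})^{\otimes j}$, being polystable of degree zero, has $F_k/F_{k-1}$ among its stable summands, so $F_k/F_{k-1}$ occurs as a stable graded factor of $E^{\otimes j}$ for every $j\ge 1$.

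Next I would bound the set of stable graded factors that can occur. As $j$ ranges over all positive integers, the collection of isomorphism classes of stable graded factors of all the $E^{\otimes j}$ is finite: indeed, by \eqref{e1} each $E^{\otimes j}$ is isomorphic to a direct sum of copies of the fixed bundles $V_1,\dots,V_p$, and the stable graded factors of such a direct sum of semistable degree-zero bundles are, by Proposition \ref{prop1}, just the stable graded factors of the $V_i$ — a fixed finite list, say $\{S_1,\dots,S_q\}$. Hence each $F_k/F_{k-1}$ is isomorphic to one of $S_1,\dots,S_q$, and more importantly, for every $j$ the polystable bundle $(F_k/F_{k-1})^{\otimes j}$ decomposes as a direct sum of copies of $S_1,\dots,S_q$. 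This is precisely condition \eqref{e1} for the bundle $F_k/F_{k-1}$ (with the finitely many bundles $S_1,\dots,S_q$ in the role of $V_1,\dots,V_p$), so $F_k/F_{k-1}$ is finite.

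The main obstacle I anticipate is the bookkeeping in the second step: justifying rigorously that the stable graded factors of a polystable bundle of degree zero coincide with its stable direct summands, and that passing to the associated graded of an iterated tensor-filtration does not introduce new stable factors beyond those coming from the tensor products of the pieces. Both points follow from Proposition \ref{prop1} together with the fact that an extension of semistable degree-zero sheaves is semistable of degree zero and that a polystable bundle equals the direct sum of its own stable graded factors, but the argument must be assembled carefully to conclude that the \emph{same} finite list $\{S_1,\dots,S_q\}$ works uniformly in $j$.
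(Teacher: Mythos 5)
Your proposal is correct and follows essentially the same route as the paper: both arguments show that $(F_k/F_{k-1})^{\otimes j}$ is polystable of degree zero (Corollary \ref{cor1}), that its stable summands occur among the stable graded factors of $E^{\otimes j}$, and then use \eqref{e1} together with Proposition \ref{prop1} to pin those factors down to the fixed finite list of stable graded factors of $V_1,\dots,V_p$. The only (immaterial) difference is in the bookkeeping: you realize $(F_k/F_{k-1})^{\otimes j}$ as a graded piece of the tensored filtration of $E^{\otimes j}$, whereas the paper exhibits it as a polystable degree-zero quotient of the semistable degree-zero subbundle $F_k^{\otimes j}\subset E^{\otimes j}$.
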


\begin{proof}
Consider the vector bundles $V_1,\, \cdots,\, V_p$ in \eqref{e1}. Since they are all finite, 
just as in \eqref{e3}, for every $1\, \leq\, i\, \leq\, p$ there is a filtration of 
holomorphic subbundles
\begin{equation}\label{e6}
0\,=\, U^i_0\, \subset\, U^i_1\, \subset\, \cdots \, \subset\, U^i_{n_i-1} \,
\subset\, U^i_{n_i}\,=\, V_i
\end{equation}
such that $U^i_\nu/U^i_{\nu-1}$ is a stable vector bundle of degree zero for all $1\,
\leq\, \nu\, \leq\, n_i$. Now consider the collection of stable vector bundles
\begin{equation}\label{u}
\{\{U^i_\nu/U^i_{\nu-1}\}^{n_i}_{\nu=1}\}^p_{i=1}\, .
\end{equation}
{}From Proposition \ref{prop1} it follows that this collection does not depend
on the choice of the filtrations in \eqref{e6}.

Consider the vector bundle $F_k/F_{k-1}$ in the statement of the proposition. Since 
$F_k/F_{k-1}$ is stable, by Corollary \ref{cor1}, the vector bundle $(F_k/F_{k-1})^{\otimes 
j}$ is polystable for every positive integer $j$.
Also, $$\mu((F_k/F_{k-1})^{\otimes j})\,=\, j\cdot \mu(F_k/F_{k-1})\, =\,0\, .$$

Since $F^{\otimes j}_k$ is a subbundle of $E^{\otimes j}$ of degree zero, and $E^{\otimes j}$ 
is semistable of degree zero (recall that $E^{\otimes j}$ is a finite bundle), it follows 
that $F^{\otimes j}_k$ is semistable of degree zero. Also, $(F_k/F_{k-1})^{\otimes j}$ is a 
polystable quotient of $F^{\otimes j}_k$ of degree zero. From these it follows that 
$(F_k/F_{k-1})^{\otimes j}$ is a direct sum of copies of stable graded factors of $E^{\otimes 
j}$ (see Definition \ref{def1}). Now using \eqref{e1} and Proposition \ref{prop1} we conclude that 
$(F_k/F_{k-1})^{\otimes j}$ is a direct sum of copies of the vector bundles in the collection 
in \eqref{u}. Therefore, $F_k/F_{k-1}$ is a finite bundle.
\end{proof}

\begin{theorem}\label{thm2}
A holomorphic vector bundle on $M$ is finite if and only if it is \'etale trivial.
\end{theorem}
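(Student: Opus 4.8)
The plan is to prove the two implications separately. For the easy direction, suppose $E$ is \'etale trivial via a finite connected \'etale covering $\varpi\colon Y\to M$ with $\varpi^*E$ holomorphically trivial. One can always replace $Y$ by its Galois closure, so assume $\varpi$ is Galois with group $\Gamma$. Then $E$ is a direct summand of $\varpi_*\varpi^*E \cong \varpi_*\mathcal{O}_Y^{\oplus r_E}$, and $\varpi_*\mathcal{O}_Y$ is a finite bundle because it carries a flat connection (the push-forward of the trivial one) with monodromy factoring through the finite group $\Gamma$ acting on the fibre; more directly, $(\varpi_*\mathcal{O}_Y)^{\otimes j}$ decomposes, for all $j$, into summands indexed by the irreducible $\Gamma$-representations occurring in the regular representation, a finite list independent of $j$. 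Since finiteness passes to tensor powers, direct sums, and components (quoted in the excerpt from Nori), $E$ is finite.

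For the hard direction, assume $E$ is finite and build the \'etale covering from the filtration data. Take the filtration \eqref{e3} of $E$ by subbundles $F_k$ with stable degree-zero quotients $G_k := F_k/F_{k-1}$; by Lemma \ref{lem2} each $G_k$ is a finite stable bundle, so by Proposition \ref{prop4} each $G_k$ is \'etale trivial. The key point is to produce a \emph{single} finite \'etale Galois cover trivializing all the $G_k$ simultaneously: pull back along the fibre product of the individual covers and pass to a connected component, obtaining $\varpi\colon Y\to M$ finite \'etale Galois such that every $\varpi^*G_k$ is holomorphically trivial. Then $\varpi^*E$ is a successive extension of trivial bundles on $Y$. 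This does not immediately make $\varpi^*E$ trivial, since extensions of trivial bundles on a compact complex manifold need not split; but one can iterate the procedure. Concretely, having arranged that $\varpi^*E$ is filtered with trivial quotients, observe that $\varpi^*E$ is itself a finite bundle on $Y$ (finiteness is preserved by \'etale pullback, as finiteness is defined by isomorphisms of tensor-polynomial bundles, which pull back), and the same machinery applies on $Y$.

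To close the loop one needs a termination argument, and this is where I expect the main obstacle to lie. One natural approach: show that finiteness together with a filtration by trivial quotients forces $E$ to be flat with finite monodromy directly, by reconstructing the monodromy representation. Here is the cleanest route. Since $E$ is finite, run the Tannakian construction of Proposition \ref{prop4} not just for a single stable bundle but for the whole finite collection: let $\mathcal{V}_1,\dots,\mathcal{V}_N$ be the stable degree-zero bundles appearing among all stable graded factors of all $E^{\otimes j}$ (a finite set, by \eqref{e1} and Proposition \ref{prop1}, exactly as in Lemma \ref{lem2}), together with their duals, and all indecomposable components of the $\mathcal{V}_i\otimes\mathcal{V}_j^*$; by Corollary \ref{cor1} and Lemma \ref{lem1} these are again stable of degree zero, and using Propositions \ref{prop2} and \ref{prop3} the category of bundles that are successive extensions of direct sums of the $\mathcal{V}_i$ is abelian and closed under tensor product and duals — hence a neutral Tannakian category. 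Its fundamental group-scheme is finite by Nori's \cite[Theorem 1.2]{No1} because the collection of indecomposable objects is finite. The bundle $E$ lies in this category (its stable graded factors are among the $\mathcal{V}_i$), so $E$ corresponds to a representation of a finite group $\Gamma$, giving a finite \'etale Galois cover $\varpi\colon\widehat Y\to M$ with $\varpi^*E$ trivial; a connected component $Y$ finishes it.

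Thus the real work is the verification that this larger category — objects being holomorphic bundles admitting a filtration with graded pieces among the $\mathcal{V}_i$ — is closed under kernels, cokernels, tensor products, and duals, so that Nori's Tannakian formalism applies. Closure under tensor product follows because a tensor product of two filtered bundles is filtered with graded pieces among the $\mathcal{V}_i\otimes\mathcal{V}_j$, each polystable of degree zero by Corollary \ref{cor1}, hence a direct sum of stable degree-zero bundles which by construction lie in our list; similarly for duals. Closure under kernels and cokernels of arbitrary morphisms is the delicate part: given $f\colon A\to B$ between two such bundles, Proposition \ref{prop3} (applied to images, which have degree zero by semistability, Lemma \ref{lem1}) shows $\mathrm{image}(f)$ is a subbundle, and one checks its stable graded factors are forced — by semistability of $A$, $B$ and the list being closed under the relevant operations — to lie among the $\mathcal{V}_i$, and likewise for $\ker f$ and $\mathrm{coker} f$. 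Once this is in place, the finiteness of the fundamental group-scheme and the reduction of structure group of $E$ to $\Gamma$ follow verbatim from Proposition \ref{prop4}, and Theorem \ref{thm2} follows.
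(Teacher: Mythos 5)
Your easy direction is fine, and it is genuinely different from the paper's: the paper descends the trivial connection from a Galois closure of $\varpi$ to obtain a flat holomorphic connection on $E$ with finite monodromy and then quotes the implication ``flat with finite monodromy $\Rightarrow$ finite'' from \cite{BP}, whereas you realize $E$ as a component of $E\otimes\varpi_*\mathcal{O}_Y\cong\varpi_*\varpi^*E$ via the trace splitting and check finiteness of $\varpi_*\mathcal{O}_Y$ through the regular representation; both routes work. The first half of your hard direction also coincides with the paper's: the filtration \eqref{e3}, Lemma \ref{lem2}, Proposition \ref{prop4}, and a common connected Galois cover on which $\varpi^*E$ becomes a successive extension of trivial bundles. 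You correctly identify that everything now hinges on splitting this filtration.

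But your proposed termination argument has a genuine gap. The category of bundles admitting a filtration with graded pieces drawn from a finite list $\mathcal{V}_1,\dots,\mathcal{V}_N$ of stable degree-zero bundles does \emph{not} have finitely many indecomposable objects, so Nori's Theorem 1.2 does not apply to it. Already for $M$ an elliptic curve and the single graded piece $\mathcal{O}_M$, the iterated self-extensions of $\mathcal{O}_M$ are the Atiyah bundles $F_k$, $k\geq 1$: infinitely many pairwise non-isomorphic indecomposables, whose Tannakian group scheme is pro-unipotent rather than finite, and no $F_k$ with $k\geq 2$ is finite or \'etale trivial (its tensor powers produce indecomposables of unbounded rank). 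So ``finitely many graded pieces'' does not imply ``finitely many indecomposables,'' which is exactly the assertion your argument needs; at the crucial splitting step you only use finiteness of the graded pieces of $E$, never the finiteness of $E$ itself, and the conclusion is false at that level of generality. (Relatedly, your appeal to Propositions \ref{prop2} and \ref{prop3} for closure under kernels and cokernels presupposes that the objects of your enlarged category are finite bundles, which is not known.) The missing ingredient --- and what the paper invokes at precisely this point --- is Lemma 4.3 of \cite{BP}: a \emph{finite} bundle carrying a filtration by holomorphic subbundles with trivializable successive quotients is itself holomorphically trivializable. Since $\varpi^*E$ is finite and is filtered as in \eqref{tf} with trivializable quotients, that lemma closes the argument. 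Some statement of this kind, using the finiteness of $E$ itself to kill the extension classes, is unavoidable, and it is the one step your proposal does not supply.
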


\begin{proof}
Let $E$ be a finite vector bundle on $M$. Take a filtration of subbundles of it
$$
0\,=\, F_0\, \subset\, F_1\, \subset\, \cdots \, \subset\, F_{m-1} \, \subset\, F_{m}\,=\, E
$$
as in \eqref{e3}. From Lemma \ref{lem2} and Proposition \ref{prop4} we know that
$F_k/F_{k-1}$ is \'etale trivial for all $1\, \leq\, k\, \leq\, m$. Hence
there is a finite connected \'etale Galois covering
$$
\varpi\, :\, Y\, \longrightarrow\, M
$$
such that for the pulled back filtration
\begin{equation}\label{tf}
0\,=\, \varpi^*F_0\, \subset\, \varpi^*F_1\, \subset\, \cdots \, \subset\,
\varpi^*F_{m-1} \, \subset\, \varpi^*F_{m}\,=\, \varpi^*E\, ,
\end{equation}
the quotient bundle $(\varpi^*F_k)/(\varpi^* F_{k-1})\,=\, \varpi^* (F_k/F_{k-1})$ is 
trivializable for all $1\, \leq\, k\, \leq\, m$ (take a connected component of the fiber 
product of the \'etale Galois coverings for each $F_k/F_{k-1}$). Also, $\varpi^*E$ is finite 
because $E$ is so. 

Lemma 4.3 of \cite{BP} says that if a finite vector bundle $W$ admits a filtration of holomorphic
subbundles such that each successive quotient is holomorphically trivializable, then $W$ is
holomorphically trivializable. From this and \eqref{tf} it follows that $\varpi^*E$ is 
holomorphically trivializable. Hence $E$ is \'etale trivial.

To prove the converse, let $E$ be an \'etale trivial vector bundle on $M$. Let
$$
\varpi\, :\, Y\, \longrightarrow\, M
$$
be a connected \'etale covering such that $\varpi^*E$ is holomorphically trivializable. 
Consider the trivial connection $\nabla$ on $\varpi^*E$ corresponding to any holomorphic 
trivialization of $\varpi^*E$. This connection $\nabla$ does not depend on the choice of the 
holomorphic trivialization of $\varpi^*E$.

It can be shown that $\nabla$ descends to a connection on $E$. Indeed, by taking an \'etale
covering of $Y$ we may assume that $\varpi$ is Galois. The connection $\nabla$ is invariant
under the action of the Galois group $\text{Gal}(\varpi)$ on $\varpi^*E$, and hence
$\nabla$ descends to a connection on $E$. This 
descended connection on $E$ is holomorphic flat because $\nabla$ is holomorphic flat; also, 
its monodromy group is finite because $\varphi$ is a finite covering and the monodromy group 
of $\nabla$ is trivial. This immediately implies that $E$ is finite; see \cite[p.~7, Section 
4]{BP}.
\end{proof}

Theorem \ref{thm2} and its proof together have the following corollary.

\begin{corollary}\label{cor2}
A holomorphic vector bundle on $M$ is finite if and only if it admits a flat holomorphic
connection with finite monodromy.
\end{corollary}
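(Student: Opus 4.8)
The plan is to derive Corollary~\ref{cor2} directly from Theorem~\ref{thm2} together with the two directions of its proof, rather than proving anything genuinely new. The key observation is that the equivalence ``finite $\Longleftrightarrow$ admits a flat holomorphic connection with finite monodromy'' factors through the notion of \'etale triviality that Theorem~\ref{thm2} already handles.

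First I would prove the implication: if $E$ admits a flat holomorphic connection with finite monodromy, then $E$ is finite. Let $\nabla$ be such a connection, and let $\rho\,:\,\pi_1(M,x_0)\,\longrightarrow\, \mathrm{GL}(r_E,{\mathbb C})$ be its monodromy representation, whose image is a finite group $G$. The kernel $\ker\rho\,\subset\,\pi_1(M,x_0)$ is a finite-index subgroup, so it corresponds to a finite connected \'etale covering $\varpi\,:\,Y\,\longrightarrow\, M$ with the property that $\varpi^*\nabla$ has trivial monodromy; hence $\varpi^*E$, equipped with $\varpi^*\nabla$, is holomorphically trivializable (a flat bundle with trivial monodromy over a connected manifold is trivial). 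Thus $E$ is \'etale trivial, and by Theorem~\ref{thm2} it is finite.

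Conversely, if $E$ is finite, then by Theorem~\ref{thm2} it is \'etale trivial, so there is a finite connected \'etale covering $\varpi\,:\,Y\,\longrightarrow\, M$ with $\varpi^*E$ holomorphically trivializable. But this is precisely the situation analyzed in the second half of the proof of Theorem~\ref{thm2}: the trivial holomorphic connection $\nabla$ on $\varpi^*E$ is canonical (independent of the trivialization), hence $\mathrm{Gal}(\varpi)$-invariant after passing to a Galois closure, and therefore descends to a flat holomorphic connection on $E$ whose monodromy group is a quotient of $\mathrm{Gal}(\varpi)$ and in particular finite. This gives the remaining implication, and the two together establish the corollary.

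I do not expect any serious obstacle here, since all the analytic content is already contained in Theorem~\ref{thm2} and its proof; the only thing to be careful about is the passage from a finite-monodromy flat connection to an \'etale covering trivializing it (one must take the covering associated to $\ker\rho$ and note that $Y$ is connected, or replace $Y$ by a connected component and observe the monodromy of the restriction is still trivial). This is routine covering-space theory, so the corollary follows immediately. For completeness one may also note, as in \cite[p.~7, Section~4]{BP}, that a flat connection with finite monodromy on $E$ makes $E^{\otimes j}$ a direct sum of the (finitely many) flat bundles associated to the irreducible summands of the representations $\rho^{\otimes j}$ of the finite group $G$, giving a direct proof of finiteness that does not route through \'etale triviality; but invoking Theorem~\ref{thm2} is cleaner.
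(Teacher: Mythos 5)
Your proposal is correct and follows essentially the same route as the paper, which derives the corollary directly from Theorem \ref{thm2} together with the two halves of its proof (the descent of the trivial connection for one direction, and the passage from a finite-monodromy flat connection to an \'etale trivialization for the other). The only cosmetic difference is that you make the covering-space step for $\ker\rho$ explicit and mention the direct argument of \cite[Section 4]{BP} as an alternative, both of which are already implicit in the paper's proof of Theorem \ref{thm2}.
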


\section*{Acknowledgements}

The author is grateful to Madhav Nori and Vamsi Pingali for helpful comments. He is
partially supported by a J. C. Bose Fellowship.


\end{document}